\newtheorem{Theorem}{Theorem}[section]
\newtheorem{Lemma}[Theorem]{Lemma}
\theoremstyle{definition}
\theoremstyle{remark}
\numberwithin{equation}{section}
\newcommand{\Z}{{\mathbb Z}}
\newcommand{\R}{{\mathbb R}}
\newcommand{\C}{{\mathbb C}}
\newcommand{\SL}{{\mathcal {SL}}}
\begin{document}

\title[Toda flows]{Generalized Toda flows}

\author{Darren C.\ Ong}

\address{Department of Mathematics\\
Xiamen University Malaysia\\
Jalan Sunsuria\\
Bandar Sunsuria\\
43900 Sepang\\
Selangor Darul Ehsan, Malaysia}

\email{darrenong@xmu.edu.my}

\urladdr{www.xmu.edu.my/a/234.html}

\author{Christian Remling}

\address{Department of Mathematics\\
University of Oklahoma\\
Norman, OK 73019}

\email{christian.remling@ou.edu}

\urladdr{www.math.ou.edu/$\sim$cremling}

\date{January 3, 2018}

\thanks{2010 {\it Mathematics Subject Classification.} Primary 34L40 37K10 47B36 81Q10}

\keywords{Toda flow, cocycle}

%\thanks{CR's work has been supported by NSF grant DMS 1200553}
\begin{abstract}
The classical hierarchy of Toda flows can be thought of as an action of the
(abelian) group of polynomials on Jacobi matrices. We present a generalization of this
to the larger groups of $C^2$ and entire functions, and in this second case,
we also introduce associated cocycles and in fact give center stage to this object.
\end{abstract}
\maketitle
\section{Introduction}
A \textit{Jacobi matrix }is a difference operator of the form
\begin{equation}
\label{jac}
(Ju)_n = a_n u_{n+1} + a_{n-1}u_{n-1} + b_n u_n .
\end{equation}
Here, we assume that $a_n > 0$ and $b_n\in\mathbb R$ are bounded sequences, and then
$J$ is a bounded self-adjoint operator on $\ell^2(\Z)$. The space of all such Jacobi matrices
will be denoted by $\mathcal J$.
The alternative notation $\tau u$ for the difference expression from \eqref{jac} is employed when
we want to apply it to arbitrary sequences $u$, not necessarily from $\ell^2(\Z)$.

\textit{Toda flows }are global flows on $\mathcal J$, and one has one such flow for each polynomial $p$.
Please see, for example, \cite{Dickey,GeHol2,Teschl} for textbook style treatments and \cite{Dametal,Remtoda}
for recent work.
Or, as advertised in \cite{Remtoda}, we can think of the abelian group $G=\mathcal P = \R[x]$ of polynomials
(with pointwise addition as the group operation) acting on the space $\mathcal J$ of Jacobi matrices.

The Toda hierarchy is most conveniently constructed using the \textit{Lax equation}
\begin{equation}
\label{lax}
\dot{J} =  [p(J)_a, J] ;
\end{equation}
here, the anti-symmetric part
$p(J)_a$ of $p(J)$ is defined via its matrix representation in terms of the standard unit vectors $\delta_n\in\ell^2$. In other words,
if we write $X_{jk}=\langle \delta_j, X\delta_k\rangle$ for a bounded self-adjoint
operator $X$, then $(X_a)_{jk}=X_{jk}$ for $j<k$ and $=-X_{jk}$ if $j>k$,
and $(X_a)_{jj}=0$. The time derivative $\dot{J}$ of $J(t)$ is defined in the obvious way as the limit of the difference quotients
with respect to the operator norm.

The Lax equation \eqref{lax} defines a global flow \cite[Theorem 12.6]{Teschl},
and to define the group action $p\cdot J$ that was mentioned above, we take the time one map of this flow.
This \textit{is }an action of the abelian group $\mathcal P$ because, as is well known, any two Toda flows commute.
Notice also that the right-hand side of \eqref{lax} is linear in $p$, and, in particular, multiplying $p$ by a constant amounts
to the same as rescaling time. This means that in terms of the group action, the solution $J(t)$ to \eqref{lax} with the initial
value $J(0)=J$ is given by $J(t)=(tp)\cdot J$.

This group action has the following fundamental properties, most of which are classical and very well known:
the action $p\cdot J$ also commutes with the shift, which sends the coefficients of a Jacobi matrix to their shifted version
$(a_{n+1},b_{n+1})$. So we in fact have an action of the larger abelian group $G=\mathcal P\times\Z$, with $\Z$ acting
by shifts. The action is by unitary conjugation: $g\cdot J=U^*JU$ for some unitary operator $U=U(g,J)$. So all spectral
properties are preserved (the word \textit{isospectral }is often used in this context). In fact, the (generalized) reflection
coefficients are also preserved, and this is a property that has come into focus more recently \cite{Remtoda,Ryb}.

It is very natural to now wonder what would happen here if we consider more general functions $f$ instead of $p$,
and this is the subject of this paper. Of course, we can define $f(J)$ for general $f\in L^{\infty}$ via the spectral theorem,
and this will be a bounded operator. However, the potential problems start at the next step: the anti-symmetric (or upper triangular)
part of a bounded operator need not be bounded. This means that \eqref{lax} would need interpretation, and it also makes
it doubtful if this equation can then still define global flows and (everywhere defined) group actions, which is a very convenient
property of the Toda hierarchy, which we would like to keep.

We will approach things in two stages here. In the next section, we present a very simple recipe how we can
work around this issue for $f\in C^2$ and thus obtain global flows and an action of a large group from \eqref{lax}.
As expected, $f\cdot J$ can then also be obtained from what we have already by approximation, as
$f\cdot J=\lim p_n\cdot J$, for polynomials $p_n$ that converge to $f$ in a suitable sense. This in turn means
that many of the properties of the classical flows just carry over automatically.

There are important exceptions to this, though, and this is what we will discuss in the second part.
We do not know if $f\cdot J$ is unitarily equivalent to $J$ for a general $f\in C^2$; this property
will not just follow from approximation, at least not entirely. (It will follow from what we do in Section 2 that
$\sigma(f\cdot J)=\sigma(J)$, and the results of \cite{Remgenrefl} imply that the absolutely continuous parts of
multiplicity $2$ are unitarily equivalent.) One standard argument in the classical setting constructs
$U$ by solving $\dot{U}=P(t\cdot J)U$, but this becomes technically unpleasant if $P$ isn't bounded,
and the standard results from the literature \cite{Tana} don't seem to apply in any obvious way.

There is a second, different approach to these issues that was advertised in \cite{Remtoda}, and this
is what we will pursue here: if we have
an associated cocycle that extends the shift cocycle, then unitary equivalence and more will follow.
This approach, too, doesn't seem to work in complete generality. The natural class of functions $f$
for which the Toda cocycle can be generalized consists of
entire functions $f$, and this is what we will discuss in the second part of this note. Our choice of approach
is really very much a matter of taste since for entire functions, the first argument outlined above, via the equation
$\dot{U}=PU$, also works without too much trouble. We hope, however, that the detailed discussion
of cocycles in Sections 3, 4 and the appendix will also further illuminate various aspects of \cite{Remtoda}.

It appears that this second, more specialized scenario of the group
of entire functions acting on $\mathcal J$ via generalized Toda flows recommends itself most for
possible future use and investigation because of the additional machinery that is available here.
\section{The general Lax equation}
In this section, we assume that $f\in C^2(\mathbb R)$; of course, $f(J)$ only depends on the restriction of $f$
to the spectrum $\sigma(J)$ of $J$. We would like to consider the evolution equation $\dot{J}=X_f(J)$, with
$X_f(J)\equiv [f(J)_a, J]$. The existence of a second derivative will later ensure that $X(J)$ obeys a Lipschitz condition,
so that we have the usual Picard iteration method available.

But first we need to define $X_f(J)$; as pointed out in the introduction, $f(J)_a$ is not guaranteed to be a bounded
operator. We completely bypass this problem by simply interpreting $X(J)$ in terms of its matrix elements with respect to the
standard ONB $\{ \delta_n\}_{n\in\Z}$. In other words, we define
\[
X_f(J)_{jk} = \begin{cases} \left( [f(J)_a,J] \right)_{jk} & |j-k|\le 1 \\ 0 & \textrm{otherwise}
\end{cases} ;
\]
strictly speaking, this is somewhat formal, but it's clear how to interpret the formula rigorously:
in the commutator $f(J)_aJ-Jf(J)_a$, just multiply the infinite matrices in both terms according to
the usual row times column recipe. Since $J$ is tridiagonal, this is well defined. Then we take the $(j,k)$-element
of the resulting infinite matrix. Also, we have introduced the property of $X_f$ of being tridiagonal simply
by decree here; in the classical setting ($f=p$ a polynomial), this follows from the structure of $X$.

It is clear that $X_f(J)$ is formally symmetric, that is, $X_{jk}=X_{kj}$, and has bounded matrix elements,
so in fact defines a self-adjoint bounded tridiagonal operator on $\ell^2(\Z)$ (it's not quite a Jacobi matrix itself
because we don't know if the off-diagonal elements are positive).

We now refer to the theory of
\textit{operator differentiable functions }to control the dependence of $X_f(J)$ on $J$.
From \cite[Corollary 3.3]{Opdiff} we obtain that
\begin{equation}
\label{lip0}
\| f(J)-f(J')\| \le C \| J-J'\| , \quad \|J\|, \|J'\|\le R
\end{equation}
and here $C=2\left( \|f\|_{C[-R,R]}+\|f''\|_{L^2(-R,R)}\right)$ would work as the constant, and we have made no attempt
to optimize this; see \cite{Opdiff} for the subtleties of these questions.
Indeed, the precise form of the constant or of the conditions imposed on $f$ is not important for us,
what matters is that we now obtain the Lipschitz condition
\begin{equation}
\label{lip}
\| X_f(J)-X_f(J')\| \le L \|J-J'\|, \quad \|J\|, \|J'\| \le R ;
\end{equation}
again, we can take an $L$ here that only depends on $f$ and $R$.
\begin{Theorem}
\label{T2.1}
The Lax equation $\dot{J}=X_f(J)$ has a unique global solution for any initial value $J(0)=J$.
\end{Theorem}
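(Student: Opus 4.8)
The plan is to read $\dot J=X_f(J)$ as an ordinary differential equation in the Banach space of bounded self-adjoint operators on $\ell^2(\Z)$ and to run the standard contraction-mapping (Picard) argument; the only real issue is upgrading local existence to a global solution. For local existence I would fix $R>0$ and work on the ball $\{\|J\|\le R\}$, where \eqref{lip} provides a Lipschitz constant $L=L(f,R)$ and where one also has a crude bound $\|X_f(J)\|\le C\|J\|\,\|f\|_{C[-\|J\|,\|J\|]}$: indeed every entry of $X_f(J)$ is a finite sum of products of an entry of $f(J)$, bounded by $\|f\|_{C(\sigma(J))}$, with an entry of $J$, bounded by $\|J\|$, and $X_f(J)$ is tridiagonal, so its norm is controlled by the largest entry. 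Banach's fixed point theorem applied to $J(t)=J+\int_0^tX_f(J(s))\,ds$ then produces a unique solution on a two-sided interval $[-T_0,T_0]$ whose length $T_0$ depends only on $R$, and uniqueness propagates along the maximal interval in the usual way.

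The hard part is ruling out blow-up of $\|J(t)\|$ in finite time. The crude growth bound does not suffice for this: for an unbounded $f\in C^2$ the factor $\|f\|_{C[-\|J\|,\|J\|]}$ may grow arbitrarily fast, so feeding it into $\|J(t)\|\le\|J\|+\int_0^t\|X_f(J(s))\|\,ds$ and invoking Gronwall does not close. What saves the day is that the flow must in fact preserve $\|J(t)\|$, and I would prove this on every interval of existence by approximation, using as input the classical fact that for a polynomial $p$ the Toda flow is global and isospectral \cite{Teschl}, so that its solution $J_p(t)$ with $J_p(0)=J$ satisfies $\|J_p(t)\|=\|J\|$ for all $t$.

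In detail, suppose the solution exists on a compact interval $[0,T]$ and put $R:=\sup_{0\le t\le T}\|J(t)\|<\infty$. Choose polynomials $p_k\to f$ uniformly on $[-R,R]$. Since $X_f(Y)-X_{p_k}(Y)=X_{f-p_k}(Y)$ and the entrywise bound above involves only the sup norm of $f-p_k$ on $\sigma(Y)\subseteq[-R,R]$, we get $\varepsilon_k:=\sup_{\|Y\|\le R}\|X_f(Y)-X_{p_k}(Y)\|\to 0$. The polynomial flows $J_k(t)$ with $J_k(0)=J$ satisfy $\|J_k(t)\|=\|J\|\le R$, so both $J(t)$ and $J_k(t)$ stay in the ball of radius $R$, on which $X_f$ is Lipschitz with constant $L$. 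From $J(t)-J_k(t)=\int_0^t\left(X_f(J(s))-X_{p_k}(J_k(s))\right)ds$ one gets $e(t):=\|J(t)-J_k(t)\|\le\int_0^t\left(L\,e(s)+\varepsilon_k\right)ds$, and Gronwall's inequality yields $e(t)\le \varepsilon_k T e^{LT}\to 0$ uniformly on $[0,T]$. Hence $\|J(t)\|=\lim_k\|J_k(t)\|=\|J\|$ throughout $[0,T]$, and the same argument runs on intervals of negative time.

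Finally I would combine the two ingredients. Because $\|J(t)\|$ stays equal to $\|J(0)\|$ on every interval of existence, the local existence time $T_0$ is bounded below by a fixed positive number depending only on $\|J(0)\|$; iterating the two-sided local result in steps of length $T_0$ extends the solution to all of $\R$, and local uniqueness upgrades to uniqueness of this global solution. I expect the norm-preservation step to be the genuine obstacle: the truncation built into the definition of $X_f$ destroys the exact commutator structure $[f(J)_a,J]$, so the one-line isospectrality argument by unitary conjugation is unavailable and has to be recovered through the polynomial approximation.
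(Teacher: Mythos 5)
Your proposal is correct and is essentially the paper's own argument: local existence and uniqueness by Picard iteration using the Lipschitz bound \eqref{lip}, followed by polynomial approximation together with the classical isospectrality of the polynomial flows to conclude $\|J(t)\|=\|J(0)\|$ on every interval of existence, which makes the local existence time uniform along the orbit and yields the global solution. The only (harmless) technical variation is in the comparison step: you estimate $J(t)-J_k(t)$ by Gronwall, where only the Lipschitz constant of $X_f$ enters, so uniform convergence $p_k\to f$ on $[-R,R]$ suffices, whereas the paper derives the seeded-Picard estimate \eqref{2.7}, in which the Lipschitz constant of the approximant appears, and therefore requires the approximating polynomials to have uniformly bounded second derivatives.
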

So we have now defined Toda flows for more general functions $f$; as already discussed above,
our preferred viewpoint is to think of this as an action of the larger group $G=C^2(\R)$ on $\mathcal J$.
This of course assumes that any two flows commute, which we will show below.
As above, we then define $f\cdot J$ as $J(1)$, where $J(t)$ solves $\dot{J}=X_f(J)$, $J(0)=J$.
\begin{Theorem}
\label{T2.2}
If $f_n(x)\to f(x)$ uniformly on $|x|\le \|J\|$,
then $f_n\cdot J \to f\cdot J$ in operator norm.
\end{Theorem}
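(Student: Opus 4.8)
The plan is to read Theorem \ref{T2.2} as a statement of continuous dependence of the solution of the Lax ODE $\dot J = X_g(J)$ on the function $g$, over the fixed interval $[0,1]$, and to prove it by a Gronwall estimate. Write $J(t)$ for the solution of $\dot J = X_f(J)$, $J(0)=J$, and $J_n(t)$ for the solution of $\dot J = X_{f_n}(J)$, $J_n(0)=J$; both exist on $[0,1]$ by Theorem \ref{T2.1}. Since $g\mapsto g(J)$ is linear, so is $g\mapsto X_g(J)$, and I would first record the two elementary inputs I intend to use: the Lipschitz bound \eqref{lip} for the limit function $f$, and the norm estimate $\|X_g(K)\|\le C\|K\|\,\sup_{|x|\le\|K\|}|g(x)|$ with an absolute constant $C$, which one reads off from the tridiagonal matrix elements of $X_g(K)$ by bounding the entries of $g(K)_a$ by $\|g(K)\|=\sup_{\sigma(K)}|g|$ and the entries of $K$ by $\|K\|$.

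Set $D(t)=\|J_n(t)-J(t)\|$. From the integrated equations $D(t)\le\int_0^t\|X_{f_n}(J_n(s))-X_f(J(s))\|\,ds$, and I would split the integrand as $\|X_f(J_n(s))-X_f(J(s))\|+\|X_{f_n-f}(J_n(s))\|$, using linearity in the second step. The first term is at most $L\,D(s)$ by \eqref{lip}; here it is essential to invoke the Lipschitz bound only for the fixed limit $f$, since the constant in \eqref{lip0} involves $\|f''\|_{L^2}$, which uniform convergence of the $f_n$ does not control, so the Lipschitz constants of the $X_{f_n}$ need not be uniform in $n$. The second term is the genuine perturbation: by the norm estimate above it is at most $C\|J_n(s)\|\,\sup_{\sigma(J_n(s))}|f_n-f|$. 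Provided the spectra $\sigma(J_n(s))$ stay inside $[-\|J\|,\|J\|]$, this is bounded by $C\|J\|\,\varepsilon_n$ with $\varepsilon_n=\sup_{|x|\le\|J\|}|f_n(x)-f(x)|\to 0$, and then Gronwall yields $D(1)\le C\|J\|\,\varepsilon_n e^{L}\to 0$, which is the assertion.

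The hard part will be exactly that proviso: confining $\sigma(J_n(s))$ to $[-\|J\|,\|J\|]$ on $[0,1]$, uniformly in $n$. This is delicate because $X_f(J)$ is the tridiagonal \emph{truncation} of $[f(J)_a,J]$ rather than the commutator itself, so the flow is not visibly isospectral and I cannot assume the spectrum stays put; indeed one checks that $\mathrm{tr}(J^2)$ is formally conserved while the higher moments are not, so the norm can genuinely move. Since the hypothesis supplies convergence of $f_n$ only on $[-\|J\|,\|J\|]$, I must rule out the spectrum leaving this interval, where the $f_n$ could take large, uncontrolled values.

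To address this I would argue by continuation. For the limit flow, $f\in C^2(\R)$ is bounded on every compact, so the norm estimate together with global existence confines $\sigma(J(t))$ to a fixed $[-R,R]$ on $[0,1]$; a bootstrap on the maximal subinterval where $\|J_n(s)\|\le R+1$ then shows, once $\varepsilon_n$ is small, that the Gronwall bound itself keeps $J_n$ close to $J$ and hence keeps $\|J_n\|$ near $\|J(t)\|$, so the subinterval is all of $[0,1]$. Making this self-consistent is the technical heart, and the cleanest route I anticipate is to first settle the case of polynomial $f_n$, where the flow is a genuine Lax flow and therefore unitarily conjugate to $J$, giving $\sigma(J_n(t))=\sigma(J)\subseteq[-\|J\|,\|J\|]$ for free, and then to extract the spectral confinement for general $C^2$ limits from the polynomial case by approximation. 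Securing this uniform confinement is the step I expect to be the main obstacle.
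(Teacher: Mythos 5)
Your proposal is correct and is essentially the paper's own argument: your Gronwall estimate with the splitting $X_{f_n}(J_n)-X_f(J)=\bigl(X_f(J_n)-X_f(J)\bigr)+X_{f_n-f}(J_n)$, which lets only the Lipschitz constant of the fixed limit $f$ enter, is precisely the paper's bound \eqref{2.7} applied with the roles swapped ($g=f_n$, $h=f$), and your resolution of the confinement problem --- settle the polynomial case first, where unitary equivalence gives $\sigma(J_n(t))=\sigma(J)$ for free, then transfer to general $C^2$ functions by approximation --- is exactly how the paper, inside the joint proof of Theorems \ref{T2.1} and \ref{T2.2}, establishes $\|(tf)\cdot J\|=\|J\|$ before re-running the comparison estimate. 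One aside is wrong but harmless, since you do not rely on it: the norm in fact cannot ``genuinely move'' (the paper proves $\|(tf)\cdot J\|=\|J\|$, and even $\sigma(f\cdot J)=\sigma(J)$), and the margin bootstrap you float would indeed fail for the reason you yourself identify (the hypothesis gives no control of $f_n-f$ outside $[-\|J\|,\|J\|]$), but the route you designate as cleanest avoids both issues and coincides with the paper's.
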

\begin{proof}[Proof of Theorems \ref{T2.1}, \ref{T2.2}]
For the most part, this will be a rather routine application of standard ODE techniques, except perhaps
for the existence of \textit{global }solutions, and here will we give an argument that intertwines
Theorems \ref{T2.1}, \ref{T2.2} in a curious way, so we prove them together.
Compare also \cite[Section 12.2]{Teschl} for the classical setting.

Write the Lax equation, with initial condition, as an integral equation (and here we can define
the operator valued integral via Riemann sums)
\begin{equation}
\label{de}
J(t) = J + \int_0^t X_f(J(s))\, ds ,
\end{equation}
and solve this by (Picard) iteration:
\[
J_0(t) = J, \quad J_{n+1}(t) = J + \int_0^t X_f(J_n(s))\, ds
\]
The Lipschitz condition \eqref{lip} guarantees that this converges locally (in $t$) to a solution since
the integral operator on the right-hand side of \eqref{de} is contractive; this property also gives uniqueness.
We obtain a unique solution on $0\le t\le T$, where $T>0$ only depends on the Lipschitz constant $L$
that we can achieve in a neighborhood of the initial value. Or, more specifically, we can say that $T>0$
can be chosen to depend on $f$ and (a bound on) $\|J\|$ only.

We have established uniqueness and \textit{local }existence of solutions, and we now turn to Theorem \ref{T2.2}.
Let's compare the solutions $J_g(t), J_h(t)$ for two different right-hand sides $X_g,X_h$, but with the same
initial value $J'$ at $t=0$. Throughout the following argument, we may have to restrict $t$ to a small interval
$0\le t\le T$, so that we can be sure that the solutions actually exist there.

Consider the Picard iterates
\[
J_0(t) = J_g(t), \quad J_{n+1}(t) = J' + \int_0^t X_h(J_n(s))\, ds
\]
for $J_h$, but with the added twist that we start the process not with the constant function equal to the initial value,
but with the comparison solution $J_g(t)$. Then, since $X_f$ is linear in $f$,
\[
J_1(t) - J_0(t) = \int_0^t X_{h-g}(J_0(s))\, ds .
\]
Now for a tridiagonal matrix, the operator norm is comparable to the $\ell^{\infty}$ norm of the sequence of matrix-elements;
more precisely, $\|J\|_{\infty}\le \|J\|_{\textrm{\rm op}}\le 3\|J\|_{\infty}$, where by $\|J\|_{\infty}$, we simply mean
$\max\{ \|a\|_{\infty}, \|b\|_{\infty} \}$, if we again denote the coefficients of $J$ by $a,b$.

Thus
\[
\|X_f(J)\|_{\infty} \le 4\|f(J)\| \|J\|_{\infty}\le 4\|f\|_{C[-\|J\|,\|J\|]} \|J\|_{\infty}
\]
(as above, we use the convention that
a norm of an operator with no subscript indicated refers to the operator norm),
and it follows that
\[
\|J_1(t)-J_0(t)\|_{\infty} \le 4Rt \|g-h\|_{C[-R,R]}, \quad R\equiv \sup_{0\le s\le T} \|J_0(s)\|_{\infty} .
\]
Note that $R$ depends only on $g$ and $J'$ (and in a moment we will see that in fact $R\le\|J'\|$).

With this preparation in place, we now run the usual Picard machine and can easily prove by induction that
\[
\|J_{n+1}(t)-J_n(t)\|_{\infty} \le \frac{4R\|g-h\|_{C[-R,R]}}{L_h} \frac{(L_ht)^{n+1}}{(n+1)!} ;
\]
here, $L_h$ denotes a Lipschitz constant for $X_h$, valid for $\|J\|\le 2\|J'\|$ (and we can make sure that
all $J_n(t)$ satisfy this bound by restricting our attention to a small enough interval $0\le t\le T$).
Since $J_n(t)\to J_h(t)$ in operator norm, uniformly on
$0\le t\le T$, and $J_g(t)=J_0(t)$, we see by summing these bounds that
\begin{equation}
\label{2.7}
\|J_g(t)-J_h(t)\|_{\infty} \le \frac{4R}{L_h} \|g-h\|_{C[-R,R]} (e^{L_ht}-1) .
\end{equation}
This now gives us the following variant of Theorem \ref{T2.2}: if $f_n\to f$ uniformly on $|x|\le 2\|J'\|$, say,
with uniformly bounded (in $L^2$) second derivatives there, then we can take $g=f$, $h=f_n$ in \eqref{2.7}, and
the Lipschitz constants $L_{f_n}$ will stay bounded, so $(tf_n)\cdot J'\to (tf)\cdot J'$, on an interval $0\le t\le T$,
which depends only on $\|J'\|$ and a uniform bound on the Lipschitz constants.

Now we return to Theorem \ref{T2.1}. Given an $f\in C^2(\R)$ and an initial value $J$, we approximate by
polynomials $p_n\to f$ on $[-2\|J\|,2\|J\|]$, with uniformly
bounded second derivatives. Then $(tp_n)\cdot J\to (tf)\cdot J$ for
$0\le t\le T$, and here we can take a $T>0$ that only depends on $f$ and $\|J\|$. In the classical case, for the polynomial flows,
we know that $(tp_n)\cdot J$ is unitarily equivalent to $J$; in particular, the operator norm is preserved. Thus also
$\|(tf)\cdot J\|=\|J\|$, and that means that in the first part of the proof,
we can also find a $T>0$ that works along the whole orbit: we solve on $0\le t\le T$, then take $(Tf)\cdot J$ as
the new initial value and can now be sure that we can also solve on $T\le t\le 2T$ etc.

Now that the existence of global solutions is guaranteed and with the extra information that
$\|(tf)\cdot J\|=\|J\|$, we can return to \eqref{2.7} and obtain the statement of Theorem \ref{T2.2}
from this. To do this, we let $g,h$ swap roles, that is, we take $g=f_n$, $h=f$, and then we don't need uniform control
on the Lipschitz constants. This works because we now know that $R\le\|J'\|$ satisfies a uniform (in $n$) bound.
\end{proof}

As an immediate consequence of Theorem \ref{T2.2}, we obtain that (generalized) Toda flows commute
with each other and the shift. In particular, it is indeed consistent to speak of an action of the abelian group
$G=C^2(\R)\times\Z$ on $\mathcal J$.
\begin{Theorem}
\label{T2.3}
The action itself $J\mapsto f\cdot J$ is continuous with respect to the operator norm, and in fact
\[
\|f\cdot J - f\cdot J'\|_{\infty} \le \|J-J'\|_{\infty} e^{L_f} .
\]
\end{Theorem}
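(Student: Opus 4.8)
The plan is to run a Gronwall estimate on the difference of the two orbits issuing from the two initial values. Write $J(t)=(tf)\cdot J$ and $K(t)=(tf)\cdot J'$ for the solutions of $\dot{J}=X_f(J)$ with initial values $J$ and $J'$; by Theorem \ref{T2.1} both exist globally, and by the norm preservation established in the proof of Theorems \ref{T2.1}, \ref{T2.2} we have $\|J(t)\|=\|J\|$ and $\|K(t)\|=\|J'\|$ for all $t$. In particular both orbits stay inside the operator-norm ball of radius $R=\max\{\|J\|,\|J'\|\}$, so a single Lipschitz constant $L_f$ for $X_f$ is valid all along both orbits. Here $L_f$ is read off in the $\ell^\infty$ norm, exactly as the constants $L_h$ were used in the proof of Theorem \ref{T2.2}; such a constant exists because the operator-norm bound \eqref{lip} transfers to the $\ell^\infty$ norm through the equivalence $\|J\|_{\infty}\le\|J\|_{\mathrm{op}}\le 3\|J\|_{\infty}$ recorded above.

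First I would pass to the integral form \eqref{de} of the Lax equation for each orbit and subtract, obtaining
\[
J(t)-K(t) = (J-J') + \int_0^t \bigl( X_f(J(s)) - X_f(K(s)) \bigr)\, ds .
\]
Taking $\ell^\infty$ norms and applying the Lipschitz bound $\|X_f(J(s))-X_f(K(s))\|_{\infty} \le L_f \|J(s)-K(s)\|_{\infty}$, the quantity $\phi(t)=\|J(t)-K(t)\|_{\infty}$ satisfies
\[
\phi(t) \le \phi(0) + L_f \int_0^t \phi(s)\, ds .
\]
Gronwall's inequality then yields $\phi(t)\le \phi(0)\, e^{L_f t}$, and evaluating at $t=1$ gives precisely $\|f\cdot J - f\cdot J'\|_{\infty} \le \|J-J'\|_{\infty}\, e^{L_f}$. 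Continuity of the action with respect to the operator norm follows at once from this estimate together with the norm equivalence.

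The one point that needs care — and it is the exact analogue of the twist used in the proof of Theorem \ref{T2.2}, now perturbing the initial value rather than the function — is that the Lipschitz condition must hold \emph{uniformly along the entire orbits} on $[0,1]$, not merely in a small neighborhood of the initial data. This is what the global norm preservation buys us: since $\|J(t)\|$ and $\|K(t)\|$ never exceed $R$, one fixed $L_f$ governs the integrand for all $s\in[0,1]$ and the Gronwall step is legitimate. I expect everything else to be entirely routine; alternatively one could reproduce the Picard-iterate bookkeeping of the proof of Theorem \ref{T2.2} verbatim, perturbing $J$ instead of $f$, and sum the resulting geometric-type bounds to reach the same conclusion.
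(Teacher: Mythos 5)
Your proof is correct, and it reaches exactly the stated bound, but it runs on a slightly different engine than the paper's. The paper proves Theorem \ref{T2.3} by the same device as Theorem \ref{T2.2}: it runs a Picard iteration with initial value $J'$ but seeded with the \emph{shifted comparison orbit} $J_0(t)=(tf)\cdot J-J+J'$, so that $J_0(s)$ differs from the exact solution $(sf)\cdot J$ by the constant $J'-J$; the first iteration step then already produces the Lipschitz factor, induction gives the factorial bounds $\|J_{n+1}(t)-J_n(t)\|_{\infty}\le \|J-J'\|_{\infty}(L_ft)^{n+1}/(n+1)!$, summing yields $(e^{L_ft}-1)\|J-J'\|_{\infty}$ for the distance from the seed, and adding back the constant shift $J'-J$ recovers the factor $e^{L_ft}$. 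You instead subtract the two integral equations \eqref{de} for the exact solutions and apply Gronwall to $\phi(t)=\|J(t)-K(t)\|_{\infty}$, which delivers $\phi(t)\le\phi(0)e^{L_ft}$ in one step. Both arguments rest on the same two inputs, which you correctly isolate: the norm preservation $\|(tf)\cdot J\|=\|J\|$ established in the proof of Theorems \ref{T2.1}, \ref{T2.2}, which confines both orbits to a fixed ball so that a single $\ell^{\infty}$-Lipschitz constant (obtained from \eqref{lip} via the tridiagonal norm equivalence $\|J\|_{\infty}\le\|J\|_{\mathrm{op}}\le 3\|J\|_{\infty}$, possibly at the cost of a harmless factor absorbed into $L_f$) is valid on all of $[0,1]$; and the resulting integral contraction estimate. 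What each route buys: yours works directly with the two exact global solutions that Theorem \ref{T2.1} already supplies and avoids the inductive bookkeeping entirely, at the price of invoking Gronwall as a separate lemma; the paper's seeded iteration keeps the argument structurally identical to the proof of Theorem \ref{T2.2} (perturbing the initial value instead of the function, exactly the parallel you note) and needs no tool beyond what that proof already set up. Your closing remark that the Picard bookkeeping could be reproduced verbatim is precisely what the paper does, so you have in effect given both proofs, with the Gronwall one as primary.
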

This is \textit{proved }by the same methods: we run a Picard iteration that we start with
$J_0(t)=(tf)\cdot J - J + J'$, and we make $J'$ the initial value in the iteration, so $J_n(t)\to (tf)\cdot J'$.
We then use the same estimates as in the proof of Theorem \ref{T2.2}. We leave the details to the reader.
\section{Cocycles and the zero curvature equation}
In this section, we review and expand the perspective on Toda flows that was proposed in \cite{Remtoda}.
As in that reference, we introduce the notation
\[
\SL = \{ T: \C \to \textrm{\rm SL}(2,\C): T \textrm{ entire, } T(x)\in \textrm{\rm SL}(2,\R) \textrm{ for }
x\in\R \}
\]
for the group of matrix functions that our cocycles will take values in. Recall that given a group action $G\times X\to X$
on a space $X$, an \textit{$\SL$-cocycle }was defined as a function $T: G\times X\to\SL$ satisfying the
\textit{cocycle identity}
\begin{equation}
\label{ccid}
T(gh; x) = T(g; h\cdot x) T(h;x) .
\end{equation}
In our context, the most basic example is given by the shift cocycle: here, $G=\Z$ acts on $\mathcal J$ by shifts, and the cocycle
is given by the transfer matrix
\[
T(1;J) = A(J)\equiv \begin{pmatrix} \frac{z-b_1}{a_1} & \frac{1}{a_1} \\ -a_1 & 0 \end{pmatrix} ,
\]
and then $T(n;J)=A((n-1)\cdot J) \cdots A(J)$ for $n\ge 0$ and $T(n;J)=T(-n; n\cdot J)^{-1}$ for $n<0$.
These latter definitions are forced on us by the cocycle identity, but of course this is also how one would
normally have defined the transfer matrix if one wants to obtain a matrix that updates solution vectors
$Y(n)=(y_{n+1}, -a_ny_n)^t$, $\tau y=zy$, in the sense that $T(n)Y(0)=Y(n)$. This property of $T$ also makes it clear that the
shift cocycle updates the \textit{Titchmarsh-Weyl $m$ functions}
\[
m_{\pm}(z) = \mp \frac{f_{\pm}(1,z)}{a_0 f_{\pm}(0,z)} , \quad \tau f_{\pm} = z f_{\pm}, \quad
f_{\pm}\in\ell^2(\Z_{\pm})
\]
along the action:
\begin{equation}
\label{3.1}
\pm m_{\pm}(n\cdot J) = T(n;J)(\pm m_{\pm}(J)) ,
\end{equation}
and here an invertible matrix $M=\left( \begin{smallmatrix} a & b \\ c & d \end{smallmatrix}\right)$ acts on the
Riemann sphere $\C_{\infty}$ as a linear fractional transformation, $Mw = \frac{aw+b}{cw+d}$.

As was emphasized in \cite{Remtoda}, a crucial property of the classical Toda hierarchy is the existence of an
$\SL$-cocycle with the same basic properties. For each polynomial flow, there is a matrix function
$B(J)$, taking values in traceless entire matrix functions and again real on the real line, such that if $T(t;J)$
is defined as the solution of
\begin{equation}
\label{todacc}
\dot{T} = B(t\cdot J) T, \quad T(0)=1,
\end{equation}
then $T$ is an $\SL$-cocycle for the action of $G=\R$ on $\mathcal J$ by the corresponding Toda flow
$(tp)\cdot J$. In fact, much more is true: one obtains, in this way, a cocycle for the action of the much larger
group $G=\mathcal P\times\Z$, with again $\Z$ acting by shifts \cite[Theorem 2.2]{Remtoda}, and, as in \eqref{3.1}, this cocycle updates
the $m$ functions along the action:
\[
\pm m_{\pm}(g\cdot J) = T(g;J)(\pm m_{\pm}(J))
\]

The key here is the existence of a \textit{joint }cocycle that extends the shift cocycle. Such a joint cocycle
(even if the acting group is just $G=\R\times\Z$, corresponding to one flow plus the shift) will automatically
update the $m$ functions correctly \cite[Theorem 2.3]{Remtoda}.

Now we can ask ourselves how such joint cocycles, for an action of $G=\R\times\Z$, can arise, and the obvious
way to produce them would be to combine two individual cocycles.
The (differentiable) cocycles for an action of $G=\R$ are exactly given by \eqref{todacc}, in the following sense:
for any choice of a $B$ as above, $T$ will be an $\SL$-cocycle, and, conversely, if an $\SL$-cocycle is given, then
it will satisfy \eqref{todacc}, with $B(J)=(d/dt)T(t;J)\bigr|_{t=0}$. Now the key question is: when does such a cocycle
form a joint cocycle for the action of $G=\R\times\Z$, when combined with the shift cocycle?

This question has a simple answer: exactly when the \textit{zero curvature equation}
\begin{equation}
\label{zc}
\dot{A}(J) = B(1\cdot J)A(J) - A(J)B(J) , \quad \dot{A}\equiv \frac{d}{dt} A(t\cdot J) \bigr|_{t=0} ,
\end{equation}
holds, and this gives (we believe) a rather transparent interpretation of this equation.

This was also pointed out in \cite{Remtoda}, but in somewhat informal language,
so let us make it completely explicit here. Suppose that, as above, $B=B(J)$ is a continuous matrix function taking values in
\[
\mathfrak{sl} =\{ B: \C \to \C^{2\times 2}: B \textrm{ entire, }\textrm{\rm tr}\:B=0, B(x)\in\R^{2\times 2}
\textrm{ for }x\in\R \} ;
\]
the continuity requirement
refers to the operator norm on $\mathcal J$ and the topology of locally uniform convergence on $\mathfrak{sl}$.
Fix one of the flows from the general Toda hierarchy; in other words, fix an $f\in C^2(\R)$; it will then be convenient
to use the short-hand notation $t\cdot J\equiv (tf)\cdot J$ for this flow. Then $B$ yields an
$\SL$-cocycle for this action via \eqref{todacc}. In addition to this, we have the shift cocycle for the action of $\Z$.
We are now trying to glue these together to produce a joint cocycle for the action of $G=\R\times\Z$.
If this works at all, then, by the (anticipated) cocycle identity, the attempt
\begin{equation}
\label{3.2}
T(g;J) := T(t;n\cdot J) T(n; J) , \quad g=(t,n)\in G=\R\times\Z ,
\end{equation}
is as good as any. Note that on the right-hand side, we are only using the individual cocycles that we already
constructed, so of course this definition is not circular.
\begin{Theorem}
\label{Tzc}
\eqref{3.2} defines a cocycle for the action of $G=\R\times\Z$ if and only if the zero curvature equation
\eqref{zc} holds.
\end{Theorem}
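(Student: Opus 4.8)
The plan is to show that the full cocycle identity \eqref{ccid} for the candidate \eqref{3.2} is equivalent to a single commuting-square relation between the flow cocycle $T(t;\cdot)$ and the shift cocycle $T(n;\cdot)$, and then to identify \eqref{zc} as the infinitesimal form of that relation at $t=0$. Concretely, writing $g_1=(t_1,n_1)$ and $g_2=(t_2,n_2)$ and substituting \eqref{3.2} into both sides of \eqref{ccid}, I would use the cocycle identities that the flow and shift cocycles already satisfy separately, together with the fact that flows commute with shifts, to match and cancel the outer factors. After this bookkeeping the identity should collapse to the cross relation
\[
T(t; n\cdot J)\, T(n;J) = T(n; t\cdot J)\, T(t;J) \qquad (t\in\R,\ n\in\Z,\ J\in\mathcal J) ,
\]
and, by factoring $T(n;\cdot)$ through $A$ via the shift cocycle identity and iterating, I expect the general case to reduce to the single instance $n=1$,
\[
T(t; 1\cdot J)\, T(1;J) = T(1; t\cdot J)\, T(t;J) \qquad (t\in\R) .
\]
Thus the whole theorem comes down to proving that this last relation, for all $t$, is equivalent to \eqref{zc}.

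For the ``only if'' direction I would differentiate the $n=1$ relation at $t=0$. Since $T(0;\cdot)=1$ and $(d/dt)T(t;1\cdot J)\bigr|_{t=0}=B(1\cdot J)$ by \eqref{todacc}, the left side has derivative $B(1\cdot J)T(1;J)$. On the right side the product rule, the definition $\dot A\equiv(d/dt)A(t\cdot J)\bigr|_{t=0}$, the identity $T(1;\cdot)=A$, and $(d/dt)T(t;J)\bigr|_{t=0}=B(J)$ combine to give $\dot A(J)+A(J)B(J)$. Equating the two and using $T(1;J)=A(J)$ yields exactly \eqref{zc}.

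For the ``if'' direction I would upgrade \eqref{zc} to the finite relation by an ODE uniqueness argument. Set $L(t)=T(t;1\cdot J)A(J)$ and $R(t)=A(t\cdot J)T(t;J)$, the two sides of the $n=1$ relation, and note $L(0)=R(0)=A(J)$. Using $t\cdot(1\cdot J)=1\cdot(t\cdot J)$ and \eqref{todacc}, one checks that $\dot L=B(1\cdot(t\cdot J))\,L$. For $R$, the derivative of the first factor along the orbit is $\dot A(t\cdot J)$, which \eqref{zc} rewrites as $B(1\cdot(t\cdot J))A(t\cdot J)-A(t\cdot J)B(t\cdot J)$; the subtracted term cancels exactly against the contribution $A(t\cdot J)B(t\cdot J)T(t;J)$ coming from $\dot T(t;J)=B(t\cdot J)T(t;J)$, leaving $\dot R=B(1\cdot(t\cdot J))\,R$. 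Hence $L$ and $R$ solve the same linear equation with the same initial value, so $L\equiv R$, which is the $n=1$ relation; the reduction of the first paragraph then recovers \eqref{ccid}.

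The heart of the matter, and the only place where \eqref{zc} is genuinely used rather than serving as bookkeeping, is this computation of $\dot R$: the cancellation of the two $A(t\cdot J)B(t\cdot J)T(t;J)$ terms is precisely the content of the zero curvature equation, so the finite relation holds exactly when the infinitesimal one does. The two points I would be careful about are the legitimacy of the uniqueness step---continuity of $B$ and of the flow makes $t\mapsto B(1\cdot(t\cdot J))$ continuous, so the linear equation $\dot Y=B(1\cdot(t\cdot J))Y$ has unique solutions---and the differentiation of $A(t\cdot J)$ along the orbit, which is justified by the group property $(s+t)\cdot J=s\cdot(t\cdot J)$ of the flow together with the existence of $\dot A$ that \eqref{zc} already presupposes.
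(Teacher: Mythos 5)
Your proposal is correct and follows essentially the same route as the paper: both reduce the cocycle identity to the single cross relation $T(t;1\cdot J)A(J)=A(t\cdot J)T(t;J)$ (with the general case recovered by induction on the shift), obtain \eqref{zc} by differentiating this at $t=0$, and conversely upgrade \eqref{zc} to the finite relation by showing both sides solve the linear ODE $\dot Y=B(1\cdot t\cdot J)Y$ with the same initial value $A(J)$. Your closing remarks on ODE uniqueness and on justifying $(d/dt)A(t\cdot J)=(d/ds)A(s\cdot t\cdot J)\bigr|_{s=0}$ via the group property match the points the paper itself flags.
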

\begin{proof}
First of all, we claim that we have a cocycle if and only if the identity
\begin{equation}
\label{3.3}
T(t;1\cdot J)A(J) = A(t\cdot J)T(t;J)
\end{equation}
holds for all $t\in\R$. Clearly, \eqref{3.3} is necessary: it is an immediate consequence of the cocycle identities
for $g=(t,1)=(t,0)(0,1)=(0,1)(t,0)$. Conversely, the cocycle identity \eqref{ccid}
for $g=(s,m)$, $h=(t,n)$, and $J$ for the $T$ defined by
\eqref{3.2} is easily seen to be equivalent to
\begin{equation}
\label{3.8}
T(t; m\cdot J')T(m;J')=T(m;t\cdot J')T(t; J') ,
\end{equation}
with $J'=n\cdot J$. Now the $m=1$ case of this is \eqref{3.3}, and then \eqref{3.8} in general follows by a
straightforward induction on $|m|$.

The zero curvature equation \eqref{zc} follows at once from \eqref{3.3} by taking the $t$ derivative at $t=0$ on both sides
and using \eqref{todacc}.
Conversely, assume now that \eqref{zc} holds. Call the two sides of \eqref{3.3} $L(t)$ and $R(t)$, respectively, and notice that
$dL/dt = B(t\cdot 1\cdot J) L$ and, by \eqref{todacc}, \eqref{zc}, and since $(d/dt)A(t\cdot J)=(d/ds)A(s\cdot t\cdot J)\bigr|_{s=0}$,
\[
\frac{dR}{dt} = \dot{A}(t\cdot J) T(t;J) + A(t\cdot J)B(t\cdot J) T(t;J) = B(1\cdot t\cdot J) R .
\]
So $L$ and $R$ solve the same ODE, and since also $L(0)=R(0)(=A(J))$, we obtain that $L=R$, as desired.
\end{proof}
\section{Cocycles for generalized Toda flows}
The discussion of the previous section suggests the following two step procedure: (1) extend the matrix
functions $B=B_p$ of the classical Toda hierarchy to (parts of) the general hierarchy; (2) show that these
$B$ satisfy the zero curvature equation.

This we will do for the group of functions
\[
\mathcal{O} = \{ f:\C\to\C : f \textrm{ entire, }f(x)\in\R \textrm{ for }x\in\R \} .
\]
For a given $J$, everything would in fact work the same way if $f$ is only holomorphic on $|z|<R$ for some $R>\|J\|$,
though we'd then have to work with cocycles which are also defined only on this disk $|z|<R$. In particular, the conclusions
of Theorem \ref{T3.4}, that $f\cdot J$ is unitarily equivalent to $J$ and the absolute values of the reflection coefficients
are preserved, are valid in this setting also. We prefer to have $\SL$-cocycles available and thus do not bother with
this generalization here.

We will obtain a cocycle for the action of $G=\mathcal{O}\times\Z$ and then all the benefits that come with it.
Once $B$ has been defined,
it will again be more convenient technically to obtain the desired results from the classical case by approximation,
so this is what we'll do here. In the appendix, we show how to derive the zero curvature equation from the Lax equation
for the classical (polynomial) flows, to make our treatment more self-contained and have a detailed proof written up of
this crucial step.

To define $B=B_f$ for $f\in\mathcal{O}$, we first of all introduce the (multiplication) operators (or, equivalently, sequences)
\[
g(z;J) = \left((J-z)^{-1}\right)_d , \quad h(z;J) = \left( 2aS(J-z)^{-1}\right)_d-1;
\]
here, $S$ again denotes the shift operator,
and $X_d$ refers to the diagonal part of a (let's say: bounded) operator $X$, again thought of as an infinite matrix with
respect to the standard basis. In other words, $X_d$ acts by multiplication by the sequence $X_{nn}=\langle \delta_n, X\delta_n\rangle$.

Note that $g,h$ are holomorphic on $|z|>\|J\|$, including $z=\infty$, and $g(\infty)=0$, $h(\infty)=-1$.
Thus we have the expansions
\begin{equation}
\label{3.6}
k(z) = \sum_{n\ge 0} k_nz^{-n}, \quad |z|>\|J\|,
\end{equation}
for $k=g,h$.

For a Laurent series $L(z)$ (about $z_0=0$), we denote its power series part by $[L]=\sum_{n\ge 0}L_nz^n$.
Using this notation, we can now define $B=B_f(J)\in\mathfrak{sl}$ for $f\in\mathcal{O}$, as follows:
\begin{equation}
\label{defB}
B = \begin{pmatrix} \left( [fh]+f(J)_d - 2(z-b)[fg]\right)_1 & -2\left( [fg]\right)_1 \\ 2\left(a^2[fg]\right)_0 &
\left( 2(z-b)[fg] -[fh]-f(J)_d \right)_1 \end{pmatrix}
\end{equation}
Here, the indices refer to the $n$ variable of the various multiplication operators (or sequences), so, for example,
$\left( [fg]\right)_1 = [fg_1]$, with $g_1=\langle \delta_1, (J-z)^{-1}\delta_1\rangle$.

Let's check that $B$ indeed takes values in $\mathfrak{sl}$. First of all, $B=B(z)$ is entire since $fg$, $fh$ are holomorphic
on $|z|>\|J\|$, so the power series parts $[fg]$, $[fh]$ converge everywhere. Moreover, using the Neumann series
\begin{equation}
\label{neum}
(J-z)^{-1} = -\sum_{n\ge 0} z^{-n-1}J^n , \quad |z|>\|J\|
\end{equation}
we see that the expansion coefficients of $g$ and $h$ are real, and so
are those of $f$, by assumption, so $B$ is real on the real line.

Next, comparison with formula (5.1) from \cite{Remtoda} and the discussion that follows (see also \cite[Section 12.4]{Teschl})
shows that if $f=p$ is a polynomial, then \eqref{defB} recovers the $B$ from the classical Toda cocycle. This of course is the
property that motivated our definition in the first place: \eqref{defB} is a natural extension of these formulae.

Finally, we observe that $B=B_f(J)$ is continuous in $f$ and $J$, in the following sense. Introduce the metric
\[
d(J,J') = \sum_{n\in\Z} 2^{-|n|} \left( |a_n-a'_n|+|b_n-b'_n| \right)
\]
on $\mathcal J$. Note that if $J_n\to J$ in operator norm, then $d(J_n,J)\to 0$ and also $\|J_n\|\le C$, but of course
these latter conditions are much weaker. Suppose now that $f_n(z)\to f(z)$ locally uniformly and $d(J_n,J)\to 0$, $\|J_n\|\le C$.
Then $B_n(z)\to B(z)$ locally uniformly,
where we have used the obvious notations $B_n\equiv B_{f_n}(J_n)$, $B=B_f(J)$.

To see this, observe first of all that \eqref{neum} gives us uniform bounds
\[
|g_k|, |h_k|\le C^k
\]
on the coefficients of $g,h$, which are valid for all $\|J\|\le C$.
We also have uniform (in $n$) bounds on the Taylor coefficients of the $f_n$, from Cauchy's estimates,
since, by assumption, $\sup \max_{|z|=R} |f_n(z)|<\infty$ for all $R>0$. The coefficients of $[f_ng]$, $[f_nh]$
are obtained as Cauchy products from those coefficients, so we have uniform bounds on these as well. This means
that to establish the locally uniform convergence $B_n(z)\to B(z)$, it is enough to verify
that the Taylor coefficients (about $z_0=0$) converge. In this form, the claim
is clear since the Taylor coefficients of $f_n(z)$ converge by assumption and those of $g(z;J_n)$, $h(z;J_n)$ are also easily seen to
converge, by again using the Neumann series. Moreover, the coefficient sequences $a,b$ are of course also continuous
functions of $J$.

So we now have a cocycle for each general Toda flow, generated by an $f\in\mathcal{O}$. We denote by
$T(f;J)$ the solution to \eqref{todacc}, evaluated at $t=1$.
The continuity of $B=B_f$ has the following important consequence.
\begin{Lemma}
\label{L3.1}
If $f_n,f\in\mathcal{O}$ and $f_n(z)\to f(z)$ locally uniformly, then $T(f_n;J)\to T(f;J)$ locally uniformly.
\end{Lemma}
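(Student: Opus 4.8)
The plan is to read this as a continuous–dependence statement for the linear ODE \eqref{todacc}, in which \emph{two} features vary with $n$: the generator $B_{f_n}$ itself, and the base trajectory $(tf_n)\cdot J$ along which it is evaluated. Write $B_n(t):=B_{f_n}\bigl((tf_n)\cdot J\bigr)$ and $B(t):=B_f\bigl((tf)\cdot J\bigr)$, so that $T(f_n;J)=T_n(1)$ and $T(f;J)=T(1)$, where $\dot T_n=B_n(t)T_n$, $\dot T=B(t)T$, with $T_n(0)=T(0)=1$. Fix a disk $|z|\le\rho$. The reduction I would carry out is this: if I can show that $B_n(t)\to B(t)$ uniformly for $t\in[0,1]$ and uniformly on $|z|\le\rho$, then a Gronwall argument will force $T_n(1)\to T(1)$ uniformly there, which is exactly the claimed locally uniform convergence.

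First I would establish that the base trajectories converge uniformly in $t$. Using the quantitative estimate \eqref{2.7} with $g=f_n$, $h=f$ — so that, exactly as in the last paragraph of the proof of Theorem \ref{T2.2}, no uniform control on the Lipschitz constants is needed — together with the norm preservation $\|(tf_n)\cdot J\|=\|J\|$ proved there, one obtains $(tf_n)\cdot J\to(tf)\cdot J$ in operator norm, uniformly for $t\in[0,1]$ (cover $[0,1]$ by finitely many short intervals and iterate). In particular all of these Jacobi matrices lie in the fixed norm–ball $\{\,\|J'\|\le\|J\|\,\}$, which is compact in the metric $d$.

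The heart of the argument is to promote the continuity of $B$ to the uniform bound $\sup_{t\in[0,1]}\sup_{|z|\le\rho}\|B_n(t)-B(t)\|\to0$. I would split $B_n(t)-B(t)$ as $\bigl(B_{f_n}-B_f\bigr)\bigl((tf_n)\cdot J\bigr)+\bigl(B_f((tf_n)\cdot J)-B_f((tf)\cdot J)\bigr)$. For the first piece, formula \eqref{defB} expresses $B_f(J')$ through $f(J')_d$ and the power–series parts $[fg],[fh]$, whose Taylor coefficients are Cauchy products of the coefficients of $f$ with the $g_k,h_k$; since $|g_k|,|h_k|\le\|J\|^k$ uniformly over the norm–ball by \eqref{neum}, this piece is controlled on $|z|\le\rho$ by $\|f_n-f\|$ on a suitable compact set, uniformly over $J'$ in the ball, and so tends to $0$. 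For the second piece, $J'\mapsto B_f(J')$ is continuous, hence uniformly continuous on the compact norm–ball, and its argument converges uniformly in $t$ by the previous paragraph. The obstacle I expect to spend the most care on is precisely here: the stated continuity of $B$ is only sequential, so the real work lies in making it uniform along the whole family of base points $(tf_n)\cdot J$, $t\in[0,1]$, while $f_n\to f$ varies simultaneously; the compactness of the norm–ball in $d$ and the uniform coefficient bounds from \eqref{neum} are what let this go through.

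With the generators converging uniformly, the conclusion is routine. Both $B_n$ and $B$ are bounded by a common $M$ on $[0,1]\times\{|z|\le\rho\}$ (the limit $B$ is continuous, hence bounded, on this compact set, and $B_n\to B$ uniformly), and $\|T(t)\|\le e^{M}$ on $[0,1]$. Subtracting the integrated forms of the two equations and setting $\Delta_n(t)=\|T_n(t)-T(t)\|$, $\varepsilon_n=\sup_{s\in[0,1]}\|B_n(s)-B(s)\|$, gives $\Delta_n(t)\le M\int_0^t\Delta_n(s)\,ds+e^{M}\varepsilon_n$, so Gronwall yields $\sup_{t\in[0,1]}\Delta_n(t)\le e^{2M}\varepsilon_n\to0$, uniformly on $|z|\le\rho$. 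Evaluating at $t=1$ and letting $\rho$ be arbitrary gives $T(f_n;J)\to T(f;J)$ locally uniformly.
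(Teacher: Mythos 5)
Your proposal is correct and is essentially the paper's own proof, with the details filled in: the paper likewise gets $(tf_n)\cdot J\to(tf)\cdot J$ uniformly on $0\le t\le1$ from Theorem \ref{T2.2} and its proof, feeds this into the previously established joint continuity of $(f,J)\mapsto B_f(J)$ to conclude $B_{f_n}((tf_n)\cdot J)\to B_f((tf)\cdot J)$ uniformly in $t$ and locally uniformly in $z$, and then invokes ``standard ODE theory,'' which is exactly your Gronwall step. One small inaccuracy: the ball $\{\|J'\|\le\|J\|\}$ is not compact in $\mathcal J$ under the metric $d$, since $d$-limits of Jacobi matrices may have some $a_n=0$; this is harmless because $B_f$ extends continuously (via \eqref{neum} and \eqref{defB}) to such degenerate limits, or alternatively you can bypass compactness entirely by the subsequence argument: if uniformity in $t$ failed along some $t_n$, pass to $t_{n_k}\to t^*$ and apply the paper's joint sequential continuity statement, which was deliberately formulated to allow $f$ and $J$ to vary simultaneously.
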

\begin{proof}
From Theorem \ref{T2.2} and its proof, we know that $(tf_n)\cdot J\to (tf)\cdot J$, uniformly on $0\le t\le 1$,
and in operator norm (convergence in $d$ already would have been enough here). By the continuity properties
of $B$ that were just observed this gives us that $B_{f_n}((tf_n)\cdot J)\to B_f((tf)\cdot J)$, uniformly in $0\le t\le 1$
and locally uniformly in $z\in\C$. By standard ODE theory, the claim now follows.
\end{proof}
Now everything else falls into place more or less automatically. We again incorporate the shift
also, so consider the group $G=\mathcal{O}\times\Z$ and its action, and then also its cocycle $T(g;J)$, defined
as in \eqref{3.2} from the individual cocycles that we already have.
\begin{Theorem}
\label{T3.3}
$T(g;J)$ is a cocycle for the action of $G=\mathcal{O}\times\Z$. In particular, the zero curvature equation \eqref{zc}
holds for any $f\in\mathcal{O}$.
\end{Theorem}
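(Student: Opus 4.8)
The plan is to establish the cocycle identity \eqref{ccid} for $G=\mathcal{O}\times\Z$ by approximation from the classical polynomial flows, and then to read off the zero curvature equation as a corollary of Theorem \ref{Tzc}. Both sides of \eqref{ccid} are already meaningful: the action of $\mathcal{O}$ on $\mathcal{J}$ exists by Theorem \ref{T2.1}, the individual flow and shift cocycles have been constructed, and so $T(g;J)$ is well defined by \eqref{3.2} for every $g=(f,n)\in\mathcal{O}\times\Z$. Since polynomials are dense in $\mathcal{O}$ (any $f\in\mathcal{O}$ is the locally uniform limit of its Taylor polynomials, which are real on $\R$ and hence lie in $\mathcal{P}$), the strategy is simply to check that both sides of \eqref{ccid} are continuous along such approximations and that the identity is already known in the polynomial case.

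First I would assemble the continuity that is needed to pass to the limit. Fix $J$ and $n_1,n_2\in\Z$, and regard the two sides of
\[
T((f_1+f_2,n_1+n_2);J) = T((f_1,n_1);(f_2,n_2)\cdot J)\, T((f_2,n_2);J)
\]
as $\SL$-valued functions of the pair $(f_1,f_2)$, with the topology of locally uniform convergence on $\mathcal{O}$. The left-hand side is continuous in $f_1+f_2$ by Lemma \ref{L3.1} (the base point $(n_1+n_2)\cdot J$ stays fixed). On the right-hand side the base point $(f_2,n_2)\cdot J$ itself moves continuously in operator norm with $f_2$, by Theorem \ref{T2.2} together with the fact that the flows commute with the shift, so here I would use that the flow cocycle $T(f;J)$ depends \emph{jointly} continuously on $f$ (locally uniformly) and on the base point $J$ (in operator norm). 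This is a mild strengthening of Lemma \ref{L3.1}, proved by the same argument but now also invoking the continuous dependence of the flow on its initial value supplied by Theorem \ref{T2.3} and the joint continuity of $B_f(J)$ in $(f,J)$ established in Section 4; standard continuous dependence for the linear ODE \eqref{todacc} then transfers this to $T$. The shift factors are manifestly continuous in $J$.

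With continuity in hand, the main step is the polynomial case. For polynomial entire parts $f_1=p$, $f_2=q$, the identity \eqref{ccid} is precisely the classical statement that the Toda cocycles combine into a joint cocycle for the action of $\mathcal{P}\times\Z$ (\cite[Theorem 2.2]{Remtoda}); its essential ingredient, the zero curvature equation for polynomial flows, is re-derived from the Lax equation in the appendix, and the defining formula \eqref{defB} reduces to the classical $B_p$ in this case, as was already checked. Approximating $f_1,f_2$ simultaneously by polynomials and using the continuity of both sides then yields \eqref{ccid} for all of $\mathcal{O}\times\Z$, which is the first assertion.

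Finally, the zero curvature equation follows at once. Fixing $f\in\mathcal{O}$ and restricting the cocycle to the subgroup $\{(tf,n):t\in\R,\ n\in\Z\}\cong\R\times\Z$ --- which is closed under the group law and whose action is one flow plus the shift --- produces a cocycle for $\R\times\Z$ of exactly the form \eqref{3.2}, so Theorem \ref{Tzc} forces \eqref{zc} for this $f$. I expect the joint continuity of $T(f;J)$ in $(f,J)$ to be the only genuinely technical point: in the approximation above it is needed precisely for the factor $T(p;\,n_1\cdot(q,n_2)\cdot J)$, where both the function and the base point vary at the same time. Everything else is bookkeeping around the classical case and the continuity statements already collected in Sections 2 and 4.
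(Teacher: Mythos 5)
Your proposal is correct and follows essentially the same route as the paper's proof: approximate the entire functions by polynomials, invoke the classical joint-cocycle result \cite[Theorem 2.2]{Remtoda}, pass to the limit using Lemma \ref{L3.1} together with continuity of $T(f;J)$ in the base point (your ``joint continuity'' is exactly the paper's ``similar continuity property of $T(f;J)$ with respect to the second argument''), and then deduce the zero curvature equation from Theorem \ref{Tzc}. Your write-up merely makes explicit some details the paper leaves to the reader, such as where the simultaneous variation of function and base point occurs and the restriction to the subgroup $\{(tf,n)\}\cong\R\times\Z$.
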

\begin{proof}
To verify the cocycle identity \eqref{ccid}, we approximate the functions $f,k\in\mathcal{O}$ in
$g=(f,m)$ and $h=(k,n)$ by polynomials, and then we're back in the classical case and can refer
to \cite[Theorem 2.2]{Remtoda} and then apply Lemma \ref{L3.1} and a similar continuity property
of $T(f;J)$ with respect to the second argument, which is proved in the same way.

The zero curvature equation now follows automatically, from Theorem \ref{Tzc}. Alternatively, it
could have been derived directly by a similar approximation argument, after passing to the integrated
form of \eqref{zc}.
\end{proof}
\begin{Theorem}
\label{T3.4}
The cocycle from Theorem \ref{T3.3} updates $m_{\pm}$ correctly:
\begin{equation}
\label{4.1}
\pm m_{\pm}(g\cdot J) = T(g;J)(\pm m_{\pm}(J))
\end{equation}
for all $g\in G=\mathcal{O}\times\Z$. As a consequence, $g\cdot J$ is unitarily equivalent to $J$,
the absolute values of the generalized reflection coefficients are preserved,
and $g\cdot J$ and $J$ are reflectionless on the same sets.
\end{Theorem}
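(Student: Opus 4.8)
The plan is to establish the $m$-function updating formula \eqref{4.1} first, since everything else in the statement is a consequence of it. The strategy is to exploit the fact, emphasized in Section 3 (specifically \cite[Theorem 2.3]{Remtoda}), that a \emph{joint} cocycle extending the shift cocycle automatically updates the $m$ functions along the action. Since Theorem \ref{T3.3} already gives us that $T(g;J)$ is a genuine cocycle for the action of $G=\mathcal{O}\times\Z$, and since it is built in \eqref{3.2} from the shift cocycle and the individual flow cocycles, the abstract result should apply directly. Thus I would first record that \eqref{4.1} holds for the shift part (this is \eqref{3.1}, already known) and then argue that it propagates to the full group by the cocycle identity together with the defining property that $T(t;J)$ comes from \eqref{todacc}.

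Concretely, I would reduce \eqref{4.1} to the two generating cases $g=(0,n)$ and $g=(f,0)$. The shift case is exactly \eqref{3.1}. For the flow case, the cleanest route is approximation: by Lemma \ref{L3.1} we have $T(f_n;J)\to T(f;J)$ locally uniformly whenever $f_n\to f$ locally uniformly, and by Theorem \ref{T2.2} we have $(f_n)\cdot J\to f\cdot J$ in operator norm, hence $m_{\pm}((f_n)\cdot J)\to m_{\pm}(f\cdot J)$ for $z$ in the relevant half-planes (the $m$ functions depend continuously on $J$ for $z\notin\sigma(J)$, and the spectra are all contained in a fixed interval since norms are preserved). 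Taking $f_n=p_n$ polynomials, the classical case \cite[Theorem 2.2]{Remtoda} gives $\pm m_{\pm}(p_n\cdot J)=T(p_n;J)(\pm m_{\pm}(J))$, and passing to the limit — using that linear fractional transformations depend continuously on their $\mathrm{SL}(2,\C)$ matrix — yields \eqref{4.1} for $f$. The general case then follows by combining the two generators via the cocycle identity \eqref{ccid}.

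Once \eqref{4.1} is in hand, I would derive the three listed consequences. For unitary equivalence, the point is that the pair $(m_+,m_-)$ determines $J$ up to unitary equivalence, and in fact these data reconstruct the half-line operators and hence the whole-line operator; since $g\cdot J$ and $J$ have $m$ functions related by a fixed $\mathrm{SL}(2,\R)$ Möbius transformation that preserves the spectral data appropriately, they share the same spectral measure structure. The precise mechanism is that \eqref{4.1} forces the $m$ functions of $g\cdot J$ to be the images of those of $J$ under an $\mathrm{SL}(2,\R)$-valued transfer matrix, and the theory developed in \cite{Remtoda} (and \cite{Remgenrefl} for reflection coefficients) translates this into unitary equivalence and preservation of the absolute values of the generalized reflection coefficients. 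The reflectionless property is characterized by an $m_+/m_-$ relation (essentially $m_+=-\overline{m_-}$ on a set) that is manifestly invariant under a common real Möbius action, so it transfers to $g\cdot J$ on the same sets.

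The main obstacle I anticipate is the limiting argument for the $m$ functions in the approximation step, specifically ensuring that the convergence $m_{\pm}(p_n\cdot J)\to m_{\pm}(f\cdot J)$ is valid and compatible with the convergence of the Möbius actions. One must be careful that the $m$ functions are evaluated off the spectrum and that the locally uniform convergence of the cocycle matrices translates into convergence of the fractional linear transformations uniformly on compact subsets of the Riemann sphere avoiding the poles. Since Theorem \ref{T2.1} guarantees $\|f\cdot J\|=\|J\|$ for all the relevant operators, the spectra stay in a fixed compact interval, so a fixed half-plane region of $z$ works uniformly; this is what makes the interchange of limits legitimate. The deduction of unitary equivalence from the updated $m$ functions is standard once \eqref{4.1} is established, so I expect it to be routine given the machinery of \cite{Remtoda}.
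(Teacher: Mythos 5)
Your proposal is correct and follows essentially the same path as the paper's proof, which likewise establishes \eqref{4.1} either by the polynomial approximation argument (using Lemma \ref{L3.1}, Theorem \ref{T2.2}, and the locally uniform convergence $m_{\pm}(J_n)\to m_{\pm}(J)$) or, alternatively, by combining Theorem \ref{T3.3} with the abstract result \cite[Theorem 2.3]{Remtoda} that a joint cocycle extending the shift cocycle automatically updates the $m$ functions, and then deduces the remaining statements from \cite[Theorem 2.4]{Remtoda}. You in fact sketch both of the paper's routes and fill in the continuity details (of Möbius actions and of $m_{\pm}$ off a fixed compact spectral interval) that the paper leaves implicit, so there is nothing to correct.
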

The \textit{generalized reflection coefficients }(which are defined for arbitrary Jacobi matrices, not necessarily
of classical scattering type) are discussed in detail in \cite{Remgenrefl}.
\begin{proof}
\eqref{4.1} follows from the same approximation argument, since if $J_n\to J$ (or just
$d(J_n,J)\to 0$), then, as is well known, $m_{\pm}(J_n)\to m_{\pm}(J)$ locally uniformly.
Or we could combine Theorem \ref{T3.3} with
\cite[Theorem 2.3]{Remtoda}. The remaining statements then follow from \cite[Theorem 2.4]{Remtoda}.
\end{proof}
\appendix
\section{Zero curvature from Lax}
In this appendix, we give a detailed derivation of the zero curvature equation \eqref{zc} for the classical (polynomial)
Toda flows, defined via their Lax equations \eqref{lax}. Since this step is of central importance in our scheme
(for example, this seems to be the
structurally most satisfying way of proving that the Toda cocycles update the $m$ functions),
there seems to be some point in giving a completely explicit treatment. The discussions of the Toda hierarchy that we have come across
either just postulate the zero curvature equation as an alternative starting point
(and substitute for the Lax equation), or they don't clearly draw the conclusion we want, even if they present
the relevant computations. Our treatment
will stay rather close to that of \cite[Section 12.2]{Teschl} (and see also \cite[Section 1.2]{GeHol2}),
with quite a few details filled in (and typographical errors in the key formula for $\dot{b}$ corrected).

So we want to establish:
\begin{Theorem}
\label{TA.1}
For a polynomial $f=p$, define $B$ by \eqref{defB}. Then $B$ obeys the zero curvature equation \eqref{zc} along the
flow defined by $f$.
\end{Theorem}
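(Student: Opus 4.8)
The plan is to verify the zero curvature equation \eqref{zc} by a direct computation, reading it as an identity between two $2\times2$ matrix functions of $z$ and checking it entrywise. Abbreviate $P=p(J)$. The left-hand side $\dot A\equiv\frac{d}{dt}A(t\cdot J)\big|_{t=0}$ is affine in $z$ and is completely determined by $\dot a_1$ and $\dot b_1$, which I first extract from the Lax equation \eqref{lax}. Taking the $(n,n)$ element of $[P_a,J]$ gives $\dot b_n = 2\left(a_nP_{n,n+1}-a_{n-1}P_{n-1,n}\right)$, and taking the $(n,n{+}1)$ element gives $P_{n,n+1}(b_{n+1}-b_n)+a_{n+1}P_{n,n+2}-a_{n-1}P_{n-1,n+1}$. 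The second off-diagonal entries here are unwelcome, but they can be removed: since $P$ commutes with $J$, the vanishing of the $(n,n{+}1)$ element of $[P,J]$ turns the last expression into the much cleaner $\dot a_n = a_n\left(P_{n+1,n+1}-P_{nn}\right)$. This reduction to nearest-neighbor data is the first key step.

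Next I set up the dictionary relating the matrix elements of $P$ to the objects in \eqref{defB}. By the Neumann series \eqref{neum}, the Laurent coefficients of $g_n$ and $h_n$ are diagonal and first off-diagonal matrix elements of powers of $J$; multiplying by $f=p$ and reading off the coefficient of $z^{-1}$ yields the clean formulas $P_{nn}=-(fg_n)_{-1}$ and $a_nP_{n,n+1}=-\tfrac12(fh_n)_{-1}$, where $(\,\cdot\,)_{-1}$ denotes the $z^{-1}$-coefficient. The only further analytic input is the resolvent recursion coming from $(J-z)(J-z)^{-1}=1$: taking its $(n,n)$ element and expressing the off-diagonal resolvent entries through $h$ gives a recursion of the form $h_n+h_{n-1}=2(z-b_n)g_n$. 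Together with the elementary truncation identity $[(z-b)L]=(z-b)[L]+L_{-1}$, which records the single way in which $[\,\cdot\,]$ fails to commute with multiplication by $z-b$, this recursion is the engine driving all the cancellations.

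With these in hand I would compute the four entries of $B(1\cdot J)A(J)-A(J)B(J)$. The $(2,2)$ entry holds by inspection, reducing to $B_{21}(1\cdot J)=-a_1^2\,B_{12}(J)$, i.e. the normalization $2\left(a^2[fg]\right)_0$ of the lower-left entry of \eqref{defB} matches the upper-right entry $-2[fg_1]$; this is exactly how that normalization was chosen. The $(1,2)$ and $(2,1)$ entries turn out to impose the same condition: using $h_1+h_2=2(z-b_2)g_2$ to rewrite $[fh_1]+[fh_2]$ and then the truncation identity, every positive power of $z$ cancels and one is left with $P_{11}-P_{22}=-\dot a_1/a_1$, which is precisely the Lax formula for $\dot a_1$ found above. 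The $(1,1)$ entry is the laborious case: after clearing the factor $a_1$, one must combine $h_n+h_{n-1}=2(z-b_n)g_n$ at $n=1$ and $n=2$, the truncation identity applied several times, and the dictionary for $P_{nn}$ and $a_nP_{n,n+1}$, and check that all terms of positive degree in $z$ disappear, leaving precisely $-\dot b_1$.

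The main obstacle is exactly this degree collapse in the $(1,1)$ entry. A priori $B(z)$ is a polynomial in $z$ of degree $\deg p$, so $B(1\cdot J)A-AB$ has degree up to $\deg p+1$, whereas $\dot A$ is affine; the vanishing of every intermediate coefficient is not formal but is forced by the recursion $h_n+h_{n-1}=2(z-b_n)g_n$ and by the correction term $L_{-1}$ in the truncation identity. The bookkeeping is further complicated by the shift: $B(1\cdot J)$ replaces each index-$n$ quantity by its index-$(n{+}1)$ version, so that, for instance, the $h_1$ sitting in $B(J)$ must be paired against the $h_2$ sitting in $B(1\cdot J)$ exactly as the $n=2$ instance of the recursion demands. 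Arranging the computation so that these pairings are visible in advance, rather than expanding everything blindly and hoping for cancellation, is what keeps the verification manageable.
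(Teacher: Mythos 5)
Your overall architecture is sound, and most of it checks out against direct computation: the Lax formulas $\dot b_n=2\left(a_nP_{n,n+1}-a_{n-1}P_{n-1,n}\right)$ and (via $[P,J]=0$) $\dot a_n=a_n\left(P_{n+1,n+1}-P_{nn}\right)$ are correct, as is the dictionary $P_{nn}=-(fg_n)_{-1}$, $a_nP_{n,n+1}=-\tfrac12(fh_n)_{-1}$, the truncation identity, and the entrywise reduction (the $(2,2)$ entry is trivial, and the $(1,2)$ and $(2,1)$ entries both reduce, with $H_n=[fh_n]+f(J)_{d,n}$, to $\dot a_1/a_1=2(z-b_2)[fg_2]-H_1-H_2$, which the $n=2$ recursion plus truncation collapses to $P_{22}-P_{11}$). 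This is a genuinely different organization from the paper's, which proves a structural identity for $(J^n)_+$ (Lemma \ref{LA.1}) and applies $f(J)_a$ to solutions of $\tau u=zu$, thereby obtaining the $z$-dependent formulas for $\dot a_1/a_1$ and $\dot b_1$ in one stroke; your route trades that lemma for standard resolvent identities. However, your stated toolkit is insufficient for the $(1,1)$ entry. Using the recursion $h_n+h_{n-1}=2(z-b_n)g_n$ at $n=1,2$ one finds $B_{11}(J)=-[fh_0]-P_{11}$ and $B_{11}(1\cdot J)=-[fh_1]-P_{22}$, the terms $P_{22}-P_{11}$ cancel against $\dot a_1/a_1$ on the left, and the $(1,1)$ entry reduces to
\begin{equation*}
\dot b_1 = (z-b_1)\left[f(h_1-h_0)\right] - 2a_1^2[fg_2] + 2a_0^2[fg_0] .
\end{equation*}
At this point you are stuck: the recursion you derived from the $(n,n)$ element of $(J-z)(J-z)^{-1}=1$ carries no $a$-dependence, relates the \emph{sum} $h_n+h_{n-1}$ (not the difference) to $g_n$, and only ever produces $[fg_k]$ with the factor $(z-b_k)$ attached, which cannot be stripped off within polynomial identities; so the bare terms $a_1^2[fg_2]$ and $a_0^2[fg_0]$ cannot be reached by it, no matter how the truncation identity is deployed.

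The missing ingredient is a second resolvent identity, coming from the \emph{off-diagonal} elements of $(J-z)(J-z)^{-1}=1$ (equivalently, from the columns of the resolvent solving $\tau u=zu$ away from the diagonal):
\begin{equation*}
(z-b_n)\left(h_n-h_{n-1}\right) = 2\left(a_n^2\, g_{n+1} - a_{n-1}^2\, g_{n-1}\right) .
\end{equation*}
With this at $n=1$, the truncation identity gives $(z-b_1)\left[f(h_1-h_0)\right]=2a_1^2[fg_2]-2a_0^2[fg_0]-\left(f(h_1-h_0)\right)_{-1}$, so the display above becomes $\dot b_1=-(fh_1)_{-1}+(fh_0)_{-1}=2\left(a_1P_{12}-a_0P_{01}\right)$, which is exactly your Lax formula, and the proof closes. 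Note that the paper never needs this second identity as a separate input: its Lemma \ref{LA.1} and the computation of $X_f(J)u$ on solutions encode precisely this information (the paper's derived formula for $\dot b_1$, polynomial in $z$ yet equal to a constant, \emph{is} this identity smeared against $f$). In your scheme it must be supplied explicitly; once it is, your proof is complete and arguably more elementary than the paper's, at the cost of one more Green's function identity.
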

We begin with a calculation that will be used later in the proof.
\begin{Lemma}
\label{LA.1}
\[
\left( J^n\right)_+ = \sum_{k=0}^{n-1} a\left( (J^k)_dS-(SJ^k)_d\right)J^{n-k-1}
\]
\end{Lemma}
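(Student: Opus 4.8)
The plan is to prove the identity by induction on $n$, reducing it to a single one-step recursion that peels off one factor of $J$ from the right. Throughout I regard $J^n$ as a band operator (it has bandwidth at most $n$), so all the matrix manipulations below involve finitely banded, bounded operators and raise no convergence issues. I write $X = X_+ + X_d + X_-$ for the decomposition of a band operator into its strictly upper triangular, diagonal, and strictly lower triangular parts, and I use the convention $(Su)_n = u_{n+1}$, so that $aS$ is the superdiagonal part of $J$ and $J = aS + (aS)^* + b$; with the opposite convention one simply interchanges the two off-diagonal halves and the argument is unchanged.

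First I would establish the recursion
\[
(J^n)_+ = (J^{n-1})_+\, J + a\left( (J^{n-1})_d S - (SJ^{n-1})_d \right) .
\]
To get this, write $J^n = J^{n-1}J$ and split $J^{n-1} = (J^{n-1})_+ + (J^{n-1})_d + (J^{n-1})_-$, so that $(J^n)_+ = \left( (J^{n-1})_+ J\right)_+ + \left( (J^{n-1})_d J\right)_+ + \left( (J^{n-1})_- J\right)_+$. The three terms are then handled by bandwidth bookkeeping using that $J$ is tridiagonal: multiplying a strictly lower triangular matrix by $J$ cannot reach above the diagonal, so the last term vanishes; multiplying the diagonal $(J^{n-1})_d$ by $J$ contributes to the strictly upper part only through the superdiagonal $a_j$ of $J$, giving $\left( (J^{n-1})_d J\right)_+ = a(J^{n-1})_d S$; and $(J^{n-1})_+ J$ is already supported on and above the diagonal, so taking its strictly upper part only discards a diagonal ``leakage'' term $D_{n-1}$, the diagonal operator with entries $(D_{n-1})_{jj} = a_j (J^{n-1})_{j,j+1}$ arising from the $m = j+1$ term in the product.

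The one genuinely non-formal step, and the point I expect to be the \emph{crux}, is to recognize this leakage $D_{n-1}$ as exactly the missing piece $a(SJ^{n-1})_d$ needed to complete the recursion. This is where self-adjointness enters: since $J$ is real symmetric, so is $J^{n-1}$, hence $(J^{n-1})_{j,j+1} = (J^{n-1})_{j+1,j} = (SJ^{n-1})_{jj}$, which gives $D_{n-1} = a(SJ^{n-1})_d$. Substituting this into the expression for $(J^n)_+$ yields the displayed recursion.

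Finally I would close the induction. For $n = 0$ the claim is the empty-sum identity $(I)_+ = 0$, and for $n = 1$ it reduces to $(J)_+ = aS$ (using $(S)_d = 0$). Assuming the formula for $n-1$, I substitute $(J^{n-1})_+ = \sum_{k=0}^{n-2} a\left((J^k)_d S - (SJ^k)_d\right) J^{n-2-k}$ into the recursion, multiply through by $J$, and absorb the extra term $a\left((J^{n-1})_d S - (SJ^{n-1})_d\right)$ as the $k = n-1$ summand; this reindexes exactly to $\sum_{k=0}^{n-1} a\left((J^k)_d S - (SJ^k)_d\right) J^{n-1-k}$, as desired. The only care required is the consistent tracking of the diagonal contributions, which the symmetry argument above handles cleanly.
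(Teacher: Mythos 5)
Your proof is correct and takes essentially the same route as the paper's: both argue by induction through the identical one-step recursion $(J^n)_+ = (J^{n-1})_+J + a\bigl((J^{n-1})_dS-(SJ^{n-1})_d\bigr)$, obtained by splitting $J^{n-1}$ into strictly upper, diagonal, and strictly lower parts and multiplying by the tridiagonal $J$ on the right. Your entrywise identification of the diagonal leakage via $(J^{n-1})_{j,j+1}=(J^{n-1})_{j+1,j}$ is exactly the step the paper phrases operator-theoretically, namely that $(J^{n}S^*a)_d$ and $(J_+J^{n})_d$ agree because the operators are adjoints of one another with real matrix elements.
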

Here, $X_+$ denotes the upper triangular part of an operator (and not including the diagonal), as usual thought of as a matrix.
In particular, and this its significance here, we can then write the anti-symmetric part of $X$ as
$X_a=2X_+-X+X_d$.
\begin{proof}
We establish this by induction on $n$. We can write
\begin{equation}
\label{jacop}
J=aS+S^*a+b .
\end{equation}
Then $J_+=aS$, and since $S_d=0$, the $n=1$ case is now clear.

Now assume the identity of the Lemma holds for $n$. We want to show that it then holds for $n+1$
also, and with the help of the induction hypothesis, this statement may be rewritten as
\begin{equation}
\label{a1}
(J^{n+1})_+=\left( J^n\right)_+J + (J^n)_d J_+ - (J_+ J^n)_d  .
\end{equation}
Now
\begin{align*}
(J^{n+1})_+ & =((J^n)_+J)_+ + ((J^n)_d J)_+ \\
& = (J^n)_+J-((J^n)_+J)_d + ((J^n)_d J)_+
\end{align*}
and also $((J^n)_dJ)_+=(J^n)_dJ_+$, so \eqref{a1} becomes
$((J^n)_+J)_d = (J_+ J^n)_d$, and here
\[
((J^n)_+J)_d = ((J^n)_+ S^*a)_d = (J^n S^*a)_d .
\]
This gives us our final reformulation of \eqref{a1} as
\[
(J^n S^*a)_d = (J_+J^n)_d ,
\]
and this follows because the
matrix elements are real and these two operators are adjoints of one another.
\end{proof}
\begin{proof}[Proof of Theorem \ref{TA.1}]
Write $f(z)=\sum_{n\ge 0} f_n z^n$.
The key idea will be to apply the Lax operator $f(J)_a$ to solutions $u$ of $\tau u=zu$.
These sequences $u$ will normally not be in $\ell^2$, but we can (and will) simply interpret
$f(J)_a u$ as the infinite matrix $f(J)_a$ applied to $u$ as a column. Since there are only
finitely many non-zero entries in each row of $f(J)$,
there will be no convergence issues (here we use that $f$ is a polynomial).

Before we do this, we make use of Lemma \ref{LA.1} to rewrite
\begin{align*}
f(J)_a & = \sum f_n \left( 2(J^n)_+ - J^n +(J^n)_d \right) \\
& = 2\sum_{0\le k<n} f_n  a\left( (J^k)_dS-(SJ^k)_d\right) J^{n-k-1} - f(J)+ f(J)_d .
\end{align*}
Now application to a solution $u$ of $\tau u=zu$ yields
\[
f(J)_a u = 2\sum_{0\le k<n} f_n  z^{n-k-1} a\left( (J^k)_dS-(SJ^k)_d\right) u - f(z)u+ f(J)_d u .
\]
Comparison with the expansions \eqref{3.6} of $g,h$, with the coefficients identified with the help of the
Neumann series \eqref{neum}, shows that
\[
f(J)_a u = \left( -2 [fg] aS + [fh]+f(J)_d \right) u ,
\]
and thus
\begin{align*}
X_f(J) u = & \left( -2 [fg] aS + [fh] +f(J)_d \right) zu\\
& + J\left( 2 [fg] aS - [fh] -f(J)_d\right) u  .
\end{align*}
We would now like to write this as a linear combination of two terms: a multiplication operator applied to $u$ and another
one applied to $Su$.
The first three terms are already of this type, and the final three can easily be brought to this form also, if we recall \eqref{jacop}
and use that $(aS+S^*a+b)u=zu$. We omit the details of this straightforward, but tedious calculation and just state the result.
Here, we will use the notations $x_+=Sx$ and $x_-=S^*x$ for shifted sequences, so for example $(x_+)_n=x_{n+1}$;
$Sx$ itself would be ambiguous here, as it could mean the operator (of multiplication) $x$ followed by the operator $S$ or
the operator of multiplication by $(Sx)$. For temporary relief with rapidly growing expressions, we also introduce
the abbreviation $H=[fh]+f(J)_d$ (so $H=H_n(z;J)$ is a sequence with polynomial dependence on $z$). We obtain that
\begin{gather*}
X_f(J)u = KaSu + Lu ,\\
K = 2(z-b_+)[fg_+] - 2(z-b)[fg] +H_- - H_+ ,\\
L = (z-b) (H-H_-) - 2a^2 [fg_+] + 2a^2_-[fg_-] .
\end{gather*}
In particular, evaluation at an $n\in\Z$ gives that
\[
(X_f(J)u)_n = K_na_n u_{n+1} + L_n u_n ;
\]
on the other hand,
\begin{align*}
(X_f(J)u)_n & = (\dot{J}u)_n = \dot{a}_nu_{n+1} + \dot{b}_nu_n + \dot{a}_{n-1}u_{n-1} \\
& = \left( \dot{a}_n - a_n \frac{\dot{a}_{n-1}}{a_{n-1}}\right) u_{n+1} +
\left( \dot{b}_n + (z-b_n)\frac{\dot{a}_{n-1}}{a_{n-1}} \right) u_n .
\end{align*}
Now any two consecutive values of a solution $u$ may be prescribed arbitrarily, so it follows that
\begin{align}
\nonumber
\frac{\dot{a}_n}{a_n} - \frac{\dot{a}_{n-1}}{a_{n-1}} & = K_n, \\
\label{a9}
\dot{b}_n + (z-b_n)\frac{\dot{a}_{n-1}}{a_{n-1}} & = L_n .
\end{align}
The first equation, viewed as a difference equation for $\dot{a}_n/a_n$, has the general solution
\begin{equation}
\label{a5}
\frac{\dot{a}_n}{a_n} = 2(z-b_{n+1})[fg_{n+1}] -H_{n+1}-H_n+C(z;J) ,
\end{equation}
This simplifies considerably because $g,h$ are related by the recursion \cite[Formula (2.188)]{Teschl}
\[
h_{n+1}+h_n = 2(z-b_{n+1}) g_{n+1} .
\]
We obtain that
\begin{equation}
\label{a8}
\frac{\dot{a}_n}{a_n} = -2\,\textrm{\rm Res}(fg_{n+1}) -f(J)_{d,n+1} -f(J)_{d,n} + C ,
\end{equation}
and here $\textrm{Res}(L)$ denotes the (formal) residue of the Laurent series $L$, that is, the coefficient of $z^{-1}$.
We now also see that $C$ must be independent of $z$ because everything else in \eqref{a8} is.
In fact, from the Neumann series \eqref{neum}, it follows that $\textrm{Res}(fg)=-f(J)_d$, so we may rewrite
\eqref{a8} as
\[
\frac{\dot{a}_n}{a_n} = f(J)_{d,n+1}-f(J)_{d,n} + C .
\]
We can now see that $C=0$, as follows: Since $\dot{a}_n = \langle \delta_n, X_f(J) \delta_{n+1} \rangle$,
we deduce that $C$ only depends on those coefficients of $J$ not too far from $n$ (how far exactly is determined
by the degree of $f$). But this holds for any $n$, so $C$ doesn't depend on $J$ at all, and for a $J$ with constant
coefficients (which is fixed by all Toda flows) we clearly must have $C=0$.

By plugging \eqref{a5}
into \eqref{a9}, and specializing to $n=1$ for convenience, we obtain the pair of formulae
\begin{align*}
\frac{\dot{a}_1}{a_1} & = 2(z-b_2)[fg_2] -H_2-H_1 \\
\dot{b}_1 & = -2(z-b_1)^2[fg_1]+2(z-b_1)H_1 +2a_0^2[fg_0]-2a_1^2[fg_2] .
\end{align*}
Now another tedious but straightforward calculation shows that these imply the zero curvature equation \eqref{zc},
for the $B$ from \eqref{defB}.
\end{proof}

\end{document}